\DeclarePairedDelimiter{\ceil}{\lceil}{\rceil}
\patchcmd{\ps@pprintTitle}{\footnotesize\itshape
       Preprint submitted to \ifx\@journal\@empty Elsevier
       \else\@journal\fi\hfill\today}{\relax}{}{}
\newtheorem{theorem}{Theorem}[section]
\newtheorem{lemma}[theorem]{Lemma}
\newtheorem{corollary}[theorem]{Corollary}
\theoremstyle{definition}
\numberwithin{equation}{theorem}
\DeclareMathOperator{\M}{\mathit{M(G)}}
\DeclareMathOperator{\MM}{\mathit{M}}
\DeclareMathOperator{\e}{\mathit{exp}}
\DeclareMathOperator{\Z}{\mathit{Z(G)}}
\DeclareMathOperator{\p}{\mathit{p}}
\DeclareMathOperator{\cc}{\mathit{c}}
\DeclareMathOperator{\G}{\mathit{G}}
\DeclareMathOperator{\N}{\mathit{N}}
\begin{document}

\begin{frontmatter}

\title{A lemma on the exponent of Schur multiplier of $p$ groups with good power structure}

 \author[IISER TVM]{A.E. Antony}
\ead{ammu13@iisertvm.ac.in}
\author[IISER TVM]{P. Komma}
\ead{patalik16@iisertvm.ac.in}
\author[IISER TVM]{V.Z. Thomas\corref{cor1}}
\address[IISER TVM]{School of Mathematics,  Indian Institute of Science Education and Research Thiruvananthapuram,\\695551
Kerala, India.}
\ead{vthomas@iisertvm.ac.in}
\cortext[cor1]{Corresponding author. \emph{Phone number}: +91 8078020899.}

\begin{abstract}

In this note, we give short proofs of the well-known results that the exponent of the Schur multiplier $\M$ divides the exponent of $\G$ for finite $\p$-groups of maximal class and potent $\p$-groups. Moreover, we prove the same for a finite $\p$-group $\G$ satisfying $\G^{\p^2}\subset \gamma_{\p}(\G)$, and for $3$-groups of class $5$. We do this by proving a general lemma, and show that these three classes of groups satisfy the hypothesis of our lemma. 
\end{abstract}

\begin{keyword}

\MSC[2010]   20B05 \sep 20D10 \sep 20D15 \sep 20F05 \sep 20F14 \sep 20F18 \sep 20G10 \sep 20J05 \sep 20J06 

\end{keyword}

\end{frontmatter}

\section{A simple lemma}

We begin with the lemma mentioned in the abstract. 
\begin{lemma}\label{L:1.1}
Let $\p$ be an odd prime and $\G$ be a finite $\p$-group with $\e(\G)=\p^n$. Suppose $\G$ satisfies the following properties:
\begin{itemize}
\item [$(i)$] $\G^{\p}$ is powerful
\item[$(ii)$] $\e(\G^{\p})$ is $\p^{n-1}$
\item [$(iii)$] $\gamma_{\p+1}(\G)\subset \G^{\p}$
\end{itemize}
Then $\e(\G\wedge \G)\mid \e(\G)$. In particular, $\e(\M)\mid \e(G)$.

\end{lemma}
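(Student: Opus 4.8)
The plan is to prove the formally stronger statement $\e(\G\wedge\G)\mid\e(\G)=\p^{n}$. Since the Schur multiplier $\M$ is the kernel of the commutator homomorphism $\G\wedge\G\to\G$, it is a subgroup of the nonabelian exterior square, so $\e(\M)\mid\e(\G\wedge\G)$ and the ``in particular'' clause drops out.

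I would set $K=\G^{\p}$ (characteristic in $\G$) and use the exact sequence induced by $\G\twoheadrightarrow\G/K$: the map $\G\wedge\G\twoheadrightarrow(\G/K)\wedge(\G/K)$ has kernel $N$ generated by the elements $k\wedge g$ with $k\in K$, $g\in\G$ (the standard description of the effect of a quotient on the exterior square). For any short exact sequence $1\to A\to E\to B\to 1$ of finite $\p$-groups one has $\e(E)\mid\e(A)\e(B)$, because $e^{\e(B)}\in A$ for every $e\in E$. So it would suffice to prove
\[
\text{(A)}\quad\e\bigl((\G/K)\wedge(\G/K)\bigr)\mid\p\qquad\text{and}\qquad\text{(B)}\quad\e(N)\mid\p^{n-1}.
\]

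For (A): hypothesis $(iii)$ gives $\gamma_{\p+1}(\G)\subseteq K$, so $Q:=\G/K$ is a finite $\p$-group of exponent $\p$ and nilpotency class at most $\p$. I would then invoke the known bound on the exponent of the exterior square of such a group --- for class strictly less than $\p$ this is classical, since $Q$ is then regular, $Q\wedge Q$ is again a $\p$-group of class $<\p$, and one checks $\e(Q\wedge Q)=\p$, while the class-$\p$ case is the one genuinely delicate point here --- to conclude $\e(Q\wedge Q)\mid\p$.

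For (B): by $(i)$, $K=\G^{\p}$ is powerful, and by $(ii)$, $\e(K)=\p^{n-1}$. From the identity $[a\wedge b,\,c\wedge d]=[a,b]\wedge[c,d]$, valid in any exterior square, together with $K\trianglelefteq\G$ (so $[k,g]\in K$), one gets $[N,N]\subseteq K\wedge K$; applying the powerful-group case of the lemma --- a powerful $\p$-group $P$ with $\p$ odd satisfies $\e(P\wedge P)\mid\e(P)$ --- to $P=K$ gives $\e(K\wedge K)\mid\p^{n-1}$, hence $\e([N,N])\mid\p^{n-1}$. It then remains to control $N$ modulo $[N,N]$, i.e.\ to show $(k\wedge g)^{\p^{n-1}}\in[N,N]$ for each generator of $N$; for this I would iterate the defining relation $xx'\wedge y={}^{x}(x'\wedge y)\cdot(x\wedge y)$ to expand $k^{\p^{n-1}}\wedge g$, use $k^{\p^{n-1}}=1$ from $(ii)$, and bound the correction terms --- each a $\wedge$-symbol with an entry in some $\gamma_{j}(\G)$, $j\ge2$ (and $j$ increasing as the collection proceeds) --- using $(iii)$, which forces the high terms of the lower central series of $\G$ into the powerful group $K$ whose power structure is pinned down by $(i)$ and $(ii)$. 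The hard part will be extracting the \emph{sharp} bound $\e(N)\mid\p^{n-1}$, rather than the $\p^{2(n-1)}$ that a crude combination of the bounds for $[N,N]$ and $N/[N,N]$ would give: this seems to require a ``powerfully embedded''-type containment inside $\G\wedge\G$, something of the shape $[N,\G\wedge\G]\subseteq N^{\p}(K\wedge K)$, and it is exactly here that all three hypotheses --- and the oddness of $\p$ --- must be used together. I expect this to be the crux of the argument.
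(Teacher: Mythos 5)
Your overall skeleton is the same as the paper's: both arguments split $\G\wedge\G$ along the exact sequence $\G^{\p}\wedge\G\rightarrow\G\wedge\G\rightarrow(\G/\G^{\p})\wedge(\G/\G^{\p})\rightarrow 1$, bound the quotient piece by $\p$ using hypothesis $(iii)$ (class of $\G/\G^{\p}$ at most $\p$, exponent $\p$) via the known exterior-square bound for groups of class at most $\p$ (Theorem 3.11 of [APT], the result you gesture at in step (A)), and then need the kernel to have exponent dividing $\p^{n-1}$. The problem is your step (B), and you correctly diagnose it yourself: it is a genuine gap, not a technicality. The kernel $N$ is exactly the image of $\G^{\p}\wedge\G$, and what is needed is the sharp statement $\e(\G^{\p}\wedge\G)\mid\e(\G^{\p})=\p^{n-1}$ for the powerful normal subgroup $\G^{\p}$. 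Your plan --- bound $[N,N]$ inside the image of $K\wedge K$ via $\e(K\wedge K)\mid\p^{n-1}$, then control $N/[N,N]$ by collecting $k^{\p^{n-1}}\wedge g$ --- only yields $\e(N)\mid\p^{2(n-1)}$ without the ``powerfully embedded''-type containment you postulate but do not establish, and that containment is essentially equivalent to the theorem you are missing. So the proposal as written proves at best $\e(\G\wedge\G)\mid\p^{2n-1}$, not $\p^{n}$.

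The paper closes exactly this hole by citation rather than by a new argument: Theorem 5.2 of [APT] states that if $N$ is a powerful normal subgroup of a finite $\p$-group $\G$ (with $\p$ odd), then $\e(N\wedge\G)\mid\e(N)$; applied to $N=\G^{\p}$ with hypotheses $(i)$ and $(ii)$ this gives $\e(\G^{\p}\wedge\G)\mid\p^{n-1}$ at once, and multiplying the two bounds finishes the proof. In other words, the hypotheses of the lemma are designed precisely so that two prior theorems ([APT] Theorems 5.2 and 3.11) can be plugged into the standard exact sequence; re-deriving the powerful case $\e(N\wedge\G)\mid\e(N)$ from scratch, as your step (B) attempts, is a substantial piece of work in its own right (it is where the power-commutator collection arguments for powerful groups live), and without either that derivation or the citation your proof does not go through.
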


\begin{proof}
Consider the following exact sequence $$\G^{\p}\wedge \G\rightarrow \G\wedge \G \rightarrow \frac{\G}{\G^{\p}}\wedge \frac{\G}{\G^{\p}} \rightarrow 1,$$ which yields $\e(\G\wedge \G) \mid \e(\G^{\p} \wedge \G)\e(\G/\G^{\p} \wedge \G/\G^{\p})$. $\G^{\p}$ being powerful, $\e(\G^{p} \wedge \G) \mid \e(\G^{\p})=\p^{n-1}$ by Theorem 5.2 of \cite{APT}. Note that $\G/\G^{\p}$ has class at most $\p$. Now applying Theorem 3.11 of \cite{APT}, we obtain $\e(\G/\G^{\p} \wedge \G/\G^{\p}) \mid \p$ finishing the proof.
\end{proof}
  
 The groups satisfying $\gamma_{\p}(\G)\subset \G^{\p^2}$ were considered by L. E. Wilson in \cite{LEW}. The class of groups satisfying $\gamma_{m}(\G)\subset \G^{\p}$ for $2\le m\le \p-1$ were considered by D. Arganbright in \cite{DA}, and it includes the class of potent $\p$-groups considered by J. Gonzalez-Sanchez and  A. Jaikin-Zapirain in \cite{SZ}. The author of \cite{PM6} proves that  $\e(\G \wedge \G)\mid \e(\G)$ for a $\p$-group $\G$ of maximal class. In \cite{PM4}, the author proves $\e(\M)\mid\e(\G)$ for potent $\p$-groups. We do the same in the next corollary, and moreover we include one more class of groups
 
 \begin{corollary}
Let $\G$ be a finite $\p$-group satisfying any of the following conditions: 
\begin{itemize}
\item[$(i)$] (Moravec, \cite{PM6}) $\G$ is a $\p$-group of maximal class
 \item[$(ii)$] (Moravec, \cite{PM4}) $\G$ is a potent $\p$-group
\item[$(iii)$] $\gamma_{\p}(\G)\subset \G^{\p^2}$
\end{itemize}
Then $\e(\G\wedge \G)\mid \e(\G)$. In particular, $\e(\M)\mid \e(G)$.
\end{corollary}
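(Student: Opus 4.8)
The plan is to reduce every case to Lemma~\ref{L:1.1}: in each of $(i)$, $(ii)$, $(iii)$ one verifies that $\G$ satisfies hypotheses $(i)$--$(iii)$ of that lemma, and then $\e(\G\wedge\G)\mid\e(\G)$, hence $\e(\M)\mid\e(G)$, is immediate. Since Lemma~\ref{L:1.1} is stated only for odd $\p$, I would first clear away $\p=2$: there the groups of maximal class are the dihedral, semidihedral and generalized quaternion $2$-groups and the potent condition reads $\gamma_2(\G)\subseteq\G^4$, and for all of these $\e(\M)\mid\e(\G)$ is classical and is in any case contained in \cite{PM6,PM4}. So from now on $\p$ is odd.

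Hypothesis $(iii)$ is the cheapest and I would settle it first. If $\G$ is potent then $\gamma_{\p+1}(\G)\subseteq\gamma_{\p-1}(\G)\subseteq\G^{\p}$; if $\gamma_{\p}(\G)\subseteq\G^{\p^2}$ then $\gamma_{\p+1}(\G)\subseteq\gamma_{\p}(\G)\subseteq\G^{\p^2}\subseteq\G^{\p}$. For $\G$ of maximal class and order $\p^m$ I would prove the stronger statement $\gamma_{\p}(\G)\subseteq\G^{\p}$: this is vacuous when $m\le\p$, since then $\gamma_{\p}(\G)=1$, while for $m\ge\p+1$ one uses that $\Phi(\G)=\gamma_2(\G)$, so $\G^{\p}\subseteq\gamma_2(\G)$, and that the only normal subgroups of $\G$ contained in $\gamma_2(\G)$ are the terms $\gamma_j(\G)$ of the lower central series, whence $\G^{\p}=\gamma_j(\G)$ for some $j\ge2$. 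Passing to the maximal class quotient $\G/\gamma_{\p+1}(\G)$ of order $\p^{\p+1}$ and invoking Blackburn's theorem that a $\p$-group of maximal class and exponent $\p$ has order at most $\p^{\p}$, one gets $(\G/\gamma_{\p+1}(\G))^{\p}\neq1$, hence $\gamma_{\p}(\G)\subseteq\G^{\p}\gamma_{\p+1}(\G)$, which forces $j\le\p$ and therefore $\gamma_{\p+1}(\G)\subseteq\gamma_{\p}(\G)\subseteq\gamma_j(\G)=\G^{\p}$.

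For hypotheses $(i)$ and $(ii)$ in cases $(ii)$ and $(iii)$ I would not reprove anything, but quote the known power structure: in a potent $\p$-group, $\G^{\p}$ is powerful and $\G^{\p^i}=\{g^{\p^i}:g\in\G\}$ with $(\G^{\p^i})^{\p^j}=\G^{\p^{i+j}}$, by Arganbright \cite{DA} and Gonzalez-Sanchez and Jaikin-Zapirain \cite{SZ}, and the same statements hold for groups with $\gamma_{\p}(\G)\subseteq\G^{\p^2}$ by Wilson \cite{LEW}. Writing $\e(\G)=\p^n$, this gives $(\G^{\p})^{\p^{n-1}}=\G^{\p^n}=1$, while if $g\in\G$ has order $\p^n$ then $g^{\p^{n-1}}=(g^{\p})^{\p^{n-2}}\in(\G^{\p})^{\p^{n-2}}$ is nontrivial, so $\e(\G^{\p})=\p^{n-1}$. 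Lemma~\ref{L:1.1} then disposes of $(ii)$ and $(iii)$.

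The maximal class case is the one I expect to cost real effort, since no off-the-shelf ``good power structure'' theorem is available here. When $m\le\p+1$ the class of $\G$ is at most $\p$, so $\gamma_{\p+1}(\G)=1$; the powerfulness of $\G^{\p}$ and the equality $\e(\G^{\p})=\p^{n-1}$ then follow from regularity (for $m\le\p$) or from Blackburn's explicit description of the finitely many maximal class groups of order $\p^{\p+1}$, and I would treat this as a routine special case. For $m\ge\p+2$ I would continue with $\G^{\p}=\gamma_j(\G)$, $j\le\p$: to prove $\G^{\p}$ powerful one must locate $[\gamma_j(\G),\gamma_j(\G)]$ and $(\gamma_j(\G))^{\p}$ among the terms of the lower central series using Blackburn's commutator estimates and the degree of commutativity, and for $\e(\G^{\p})=\p^{n-1}$ one must analyse the $\p$-th power map on $\gamma_2(\G)$ via a uniform generator, possibly borrowing intermediate estimates from \cite{PM6}. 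The genuine obstacle, I expect, is to control the depth of $\p$-th powers of elements of $\gamma_j(\G)$ tightly enough, uniformly in $m$, to force simultaneously $[\gamma_j(\G),\gamma_j(\G)]\subseteq(\gamma_j(\G))^{\p}$ and $\e(\G^{\p})=\p^{n-1}$.
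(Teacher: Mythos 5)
Your overall strategy (reduce everything to Lemma~\ref{L:1.1}) is exactly the paper's, and your treatment of cases $(ii)$ and $(iii)$ coincides with it: the paper also just quotes \cite{LEW} and \cite{SZ} (with \cite{DA}) for the facts that $\G^{\p}$ is powerful, $\e(\G^{\p})=\e(\G)/\p$ and $\gamma_{\p+1}(\G)\subseteq\G^{\p}$. The genuine gap is in case $(i)$, and you have flagged it yourself: for $|\G|\ge \p^{\p+2}$ you only establish hypothesis $(iii)$ of the lemma (your argument $\G^{\p}=\gamma_j(\G)$ with $j\le\p$, via the fact that normal subgroups inside $\gamma_2(\G)$ are lower central terms and the exponent-$\p$ bound, is fine), but hypotheses $(i)$ and $(ii)$ --- that $\G^{\p}$ is powerful and that $\e(\G^{\p})=\p^{n-1}$ --- are left as ``the genuine obstacle,'' with only a speculative plan involving degree of commutativity and the power map on $\gamma_2(\G)$. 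That is precisely where the content of the maximal class case lies, so the proposal does not prove it. The device you are missing is Blackburn's fundamental subgroup $\G_1=C_{\G}(\gamma_2(\G)/\gamma_4(\G))$: for $|\G|\ge\p^{\p+2}$ it is regular (Theorems 9.6(b) and 9.8(a) of \cite{BJ3}) and $\gamma_{\p}(\G)=\G_1^{\p}=\G^{\p}$ (Theorem 9.6(a) of \cite{BJ3} and Exercise 2, p.~119 of \cite{YB1}). From regularity of $\G_1$ one gets at once that $\G^{\p}=\G_1^{\p}$ is powerful and that $\e(\G_1^{\p})=\e(\G_1)/\p$, and since elements outside $\G_1$ have order at most $\p^2\le\e(\G_1)$, this is $\e(\G)/\p$; all three hypotheses of Lemma~\ref{L:1.1} then hold and the case closes in a few lines. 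Without this (or an equivalent uniform analysis, which you do not supply) the maximal class case remains unproved.

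Two smaller points. For $|\G|\le\p^{\p+1}$ the class of $\G$ is at most $\p$, so the paper never verifies the lemma's hypotheses there: it simply quotes Theorem 3.11 of \cite{APT} to get $\e(\G\wedge\G)\mid\e(\G)$ directly. Your plan to verify powerfulness of $\G^{\p}$ and the exponent drop at order $\p^{\p+1}$ from an ``explicit description of the finitely many maximal class groups'' of that order is both unnecessary and not something you can actually lean on (these groups are irregular and no clean classification is available); take the direct route instead. Finally, for $\p=2$ the paper argues that a maximal class $2$-group is metacyclic and quotes Proposition 7.5 of \cite{AT}; your deferral of $\p=2$ to \cite{PM6,PM4} is acceptable in spirit but less self-contained.
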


\begin{proof}
\begin{itemize}
\item[$(i)$] Suppose $\G$ is a $2$-group of maximal class, then there exist a cyclic normal subgroup of index $2$, and hence $\G$ is a metacyclic group, and the claim holds by Proposition $7.5$ of \cite{AT}. If $|\G|\leq \p^{\p+1}$, then the nilpotency class of $\G$ is less than or equal to $\p$. So by Theorem 3.11 of \cite{APT}, we obtain that $\e(\G\wedge \G)\mid \e(\G)$. So we may assume that $|\G|\geq \p^{\p+2}$. In this case, consider the fundamental subgroup $\G_1$ of $\G$, defined as the centralizer of $\gamma_2(\G)/\gamma_4(\G)$ in $\G$. By Theorem 9.6 ($b$) and Theorem 9.8 ($a$) of \cite{BJ3}, it follows that $\G_1$ is regular. Moreover $\gamma_{\p}(\G) = \G_1^{\p} = \G^{\p}$ (cf. Theorem 9.6 ($a$) of \cite{BJ3} and Exercise 2, pg. 119 of \cite{YB1}). Now apply Lemma \ref{L:1} to obtain the result.

\item[$(ii)$] $(iii)$ Let $\G$ satisfy the condition $\gamma_{\p}(\G)\subset \G^{\p^2}$ or $\G$ be a potent $\p$-group. Then it follows from \cite{LEW}, \cite{SZ} that $\G$ satisfies the three conditions of Lemma $\ref{L:1}$, and hence the result.
\end{itemize}
\end{proof}

\begin{corollary}
Let $\p$ be an odd prime and $\G$ be a $\p$-group of coclass $r$. Keeping the notations and assumptions of Theorem 4.7  of \cite{PM6}, we have $\e(G\wedge G) \mid p^{e+fn}$, where $n = 1+ \ceil{log_{p-1}(\frac{m}{p+1})}$.
\end{corollary}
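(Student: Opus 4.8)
The plan is to rerun the reduction of Lemma~\ref{L:1.1}, iterated along a lower central filtration and fed with the structural data for coclass-$r$ groups recorded in Theorem~4.7 of \cite{PM6}. From the hypotheses of that theorem one keeps a powerful characteristic subgroup $P\trianglelefteq G$ accounting for the constant $p^{e}$, together with the fact that the successive layers met between $P$ and $G$ all have exponent dividing $p^{f}$. First I would write the exact sequence
$$P\wedge G\rightarrow G\wedge G\rightarrow \frac{G}{P}\wedge\frac{G}{P}\rightarrow 1,$$
which gives $\e(G\wedge G)\mid \e(P\wedge G)\,\e(G/P\wedge G/P)$, and then use that $P$ is powerful to get $\e(P\wedge G)\mid \e(P)\mid p^{e}$ by Theorem~5.2 of \cite{APT}. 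This isolates the factor $p^{e}$; the remaining contribution must come from $G/P$.

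Next I would filter $Q:=G/P$ --- still a $p$-group of coclass at most $r$ and nilpotency class at most $m$ --- by a descending chain $Q=Q_{0}\supseteq Q_{1}\supseteq\cdots\supseteq Q_{n}=1$ of characteristic subgroups chosen among the terms of its lower central series so that each factor $Q_{i-1}/Q_{i}$ has nilpotency class at most $p$ and exponent dividing $p^{f}$. The class condition lets me apply Theorem~3.11 of \cite{APT} layer by layer, exactly as in the last line of the proof of Lemma~\ref{L:1.1}, to obtain $\e(Q_{i-1}/Q_{i}\wedge Q_{i-1}/Q_{i})\mid \e(Q_{i-1}/Q_{i})\mid p^{f}$; iterating the displayed exact sequence down the chain then yields $\e(Q\wedge Q)\mid p^{fn}$, hence $\e(G\wedge G)\mid p^{e+fn}$ and $\e(\M)\mid \e(G\wedge G)$. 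The reason only $n=1+\ceil{\log_{p-1}(\frac{m}{p+1})}$ layers are needed is the inclusion $\gamma_{k}(\gamma_{j+1}(Q))\subseteq\gamma_{k(j+1)}(Q)$: passing from one admissible lower central term to the next multiplies the available class budget by a factor of essentially $p-1$, so a short induction starting from the class-$(p+1)$ base block --- the block handled through Lemma~\ref{L:1.1} itself rather than through the class-$p$ statement of Theorem~3.11 of \cite{APT} --- shows that $n$ blocks exhaust class $m$.

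I expect the delicate point to be the exponent bookkeeping in the second paragraph: controlling the nilpotency class of the factors $Q_{i-1}/Q_{i}$ is elementary, but a priori only $P$ is exponent-controlled, so one must use the coclass hypothesis --- through the structure theorems already invoked in the proof of Theorem~4.7 of \cite{PM6} --- to propagate the single bound $p^{f}$ to \emph{every} layer of the filtration. Pinning down the exact count $n$, and in particular checking that the first block is genuinely the class-$(p+1)$ case of Lemma~\ref{L:1.1} while the remaining $n-1$ blocks are class-$\le p$ cases of Theorem~3.11 of \cite{APT}, is the only other thing to verify; the rest is routine multiplication of the layer contributions.
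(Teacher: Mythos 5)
Your first paragraph is exactly the paper's argument: with $P=\gamma_m(G)$ (powerful by Theorem 1.2 of \cite{AS}), the exact sequence $\gamma_m(G)\wedge G\to G\wedge G\to G/\gamma_m(G)\wedge G/\gamma_m(G)\to 1$ together with Theorem 5.2 of \cite{APT} isolates the factor $p^{e}$. Where you diverge is the quotient: the paper simply observes that $G/\gamma_m(G)$ has class at most $m-1$ and quotes Theorem 6.5 of \cite{APT}, which directly gives $\e\bigl(G/\gamma_m(G)\wedge G/\gamma_m(G)\bigr)\mid p^{fn}$ with the stated $n=1+\ceil{\log_{p-1}(\frac{m}{p+1})}$. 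You instead try to re-derive that bound by filtering $Q=G/\gamma_m(G)$ by lower central terms and applying Theorem 3.11 of \cite{APT} ``layer by layer,'' and this re-derivation has a genuine gap. When you cut along $Q_1\le Q$, the exact sequence you iterate has left-hand term $Q_1\wedge Q$ (and at the next stage $(Q_1/Q_2)\wedge(Q/Q_2)$, etc.): these are \emph{relative} exterior products of a normal subgroup against the whole group, not the exterior squares $Q_{i-1}/Q_i\wedge Q_{i-1}/Q_i$ of the layers. Theorem 3.11 of \cite{APT} only controls the absolute exterior square of a group of class at most $p$, so bounding the layers' exterior squares does not bound the terms that actually occur in the iteration. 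To make the relative terms tractable you would need either centrality of the layers (which collapses the argument to the trivial bound $\e(Q\wedge Q)\mid \e(Q)^{c}$ with $c$ the class, not the logarithmic count $n$) or a relative analogue of the class-$\le p$ theorem; packaging precisely this is the content of Theorem 6.5 of \cite{APT}, which is why the paper cites it rather than repeating the filtration.

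Two smaller points. The issue you flag as delicate --- propagating the exponent bound $p^{f}$ to every layer --- is in fact immediate: under the notation of Theorem 4.7 of \cite{PM6} one has $\e(G/\gamma_m(G))\mid p^{f}$, and every section of $G/\gamma_m(G)$ inherits this bound; the real difficulty is the relative terms above, which your sketch passes over with ``iterating the displayed exact sequence down the chain.'' Also, the appearance of $p+1$ and of the base $p-1$ in $n$ comes from the internal bookkeeping of Theorem 6.5 of \cite{APT}, not from Lemma \ref{L:1.1}: the hypotheses of that lemma ($G^{p}$ powerful, $\e(G^{p})=p^{n-1}$, $\gamma_{p+1}(G)\subset G^{p}$) are not verified for, and are not available in, the quotient blocks of your filtration, so the ``class-$(p+1)$ base block handled through Lemma \ref{L:1.1}'' is not justified as stated.
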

\begin{proof}
Consider the following exact sequence, $\gamma_m(\G) \wedge \G \rightarrow \G\wedge \G \rightarrow \G/\gamma_m(\G) \wedge \G/\gamma_m(\G) \rightarrow 1,$ which yields $\e(\G\wedge \G) \mid \e(\gamma_m(\G) \wedge \G)\e(\\\G/\gamma_m(\G) \wedge \G/\gamma_m(\G))$. Now $\gamma_m(\G)$ being powerful (cf. Theorem 1.2 of \cite{AS}) and class of $\G/\gamma_m(\G)$ being at most $m-1$, applying Theorem 5.2 and Theorem 6.5 of \cite{APT} yields the required bound.
\end{proof}

\section{$3$-groups of class $5$.}
We follow right notations: $x\ ^y=y^{-1}xy$, $[x, y]=x^{-1}x^y$ and the commutators are left normed. We denote the $i$-th terms of the lower and upper central series of $\G$ by $\gamma_i(\G)$ and $Z_i(\G)$, respectively.
We recall the commutator identities
\begin{equation}\label{eq:1.1}
[xy, z] = [x, z]^y [y, z],
\end{equation}
\begin{equation}\label{eq:1.2}
[z, xy] = [z, y] [z, x]^y.
\end{equation}
A standard argument shows that, in a group $\G$, any commutator of weight $r$ is multilinear modulo $\gamma_{r+1}(\G)$, in particular
\begin{align*}
[g_1, \dots, x_iy_i, \dots, g_r]\equiv [g_1, \dots, x_i, \dots, g_r] [g_1, \dots, y_i, \dots, g_r]\ \text{mod}\ \gamma_{r+1}(\G)
\end{align*}
 for all $i=1, \dots, r$. We recall the collection formulae of P. Hall given in Proposition $1.1.32$ of \cite{LGM}.
 \begin{theorem}[Commutator collection formulae]\label{Hall}
 Let $x$ and $y$ be elements of $\G$, and let $\p$ be a prime and $r$ a positive integer. For $x, y\in \G$ define $K(x, y)$ be the normal closure in $\G$ of the set of all commutators in $x, y$ of weight at least $\p^r$ and of weight at least $2$ in $y$, together with the $\p^{r-k+1}$-th powers of of all the basic commutators in $\{x, y\}$ of weight less than $p^k$ and weight at least $2$ in $y$ for $1\le k\le r$. Then
 \begin{itemize}
\item [$(i)$] $(xy)^{\p^r}\equiv x^{\p^r} y^{\p^r} [y, x]^{{\p^r}\choose{2}} [y,\ _2 x]^{{\p^r}\choose{3}} \cdots [y,\ _{\p^r-1}x]\ \text{mod}\ K(x, y)$
\item [$(ii)$] $[x^{\p^r}, y]\equiv [x, y]^{\p^r} [x, y, x]^{{\p^r}\choose{2}} \cdots [x, y,\ _{\p^r-1}\ x]\ \text{mod}\ K(x, [x, y])$
 \end{itemize}
 \end{theorem}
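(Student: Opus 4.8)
This is P.~Hall's collection formula; a complete proof is \cite[Proposition 1.1.32]{LGM}. The route I would take is as follows. Both identities are universal, so the plan is to prove them in the free group $F=F(x,y)$, working modulo a high enough term $\gamma_{c}(F)$ of the lower central series (only finitely many commutator weights are relevant). For $(i)$ I would run P.~Hall's \emph{collection process}: write $(xy)^{\p^{r}}$ as the word $(xy)(xy)\cdots(xy)$ of length $2\p^{r}$ and repeatedly push the occurrences of $x$ to the left using $zx=xz[z,x]$, first for $z=y$ and then, as new commutators $z$ are produced, for those as well. Modulo $\gamma_{c}(F)$ this terminates and yields
\[
(xy)^{\p^{r}}\;=\;x^{\p^{r}}\prod_{c}c^{\,e_{c}},
\]
the product ranging over all basic commutators $c\neq x$ in $\{x,y\}$, taken in increasing order of weight and then in the fixed ordering of basic commutators.

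The next step is to identify the exponents. Among these basic commutators, the ones of weight $1$ in $y$ are exactly $y$ itself and the left-normed commutators $[y,{}_{k}x]$ with $k\ge 1$; a direct count in the collection process (most transparently by induction on the exponent via $(xy)^{n+1}=(xy)^{n}(xy)$) gives $e_{y}=\p^{r}$ and $e_{[y,{}_{k}x]}=\binom{\p^{r}}{k+1}$, which vanishes for $k\ge\p^{r}$. These furnish precisely the factors $x^{\p^{r}}y^{\p^{r}}[y,x]^{\binom{\p^{r}}{2}}\cdots[y,{}_{\p^{r}-1}x]$ appearing in $(i)$.

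It then remains to check that every remaining basic commutator $c$ --- necessarily of weight $\ge 2$ in $y$ --- satisfies $c^{\,e_{c}}\in K(x,y)$. If $c$ has weight $\ge \p^{r}$ this is immediate from the definition of $K(x,y)$. If $c$ has weight $w<\p^{r}$, let $k$ be minimal with $w<\p^{k}$; it then suffices to show $\p^{\,r-k+1}\mid e_{c}$. Here I would invoke the classical fact (P.~Hall) that $e_{c}$ is a polynomial in the exponent $n$ of degree at most $w$ vanishing at $n=0$ and $n=1$, hence a $\mathbb{Z}$-linear combination of $\binom{n}{2},\dots,\binom{n}{w}$; since Kummer's theorem gives $v_{\p}\!\binom{\p^{r}}{j}=r-v_{\p}(j)\ge r-k+1$ for all $2\le j\le w<\p^{k}$, each such $\binom{\p^{r}}{j}$, and therefore $e_{c}=e_{c}(\p^{r})$, is divisible by $\p^{\,r-k+1}$. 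Consequently, modulo $K(x,y)$ only $x^{\p^{r}}$, $y^{\p^{r}}$ and the powers $[y,{}_{k}x]^{\binom{\p^{r}}{k+1}}$ survive, which proves $(i)$.

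Finally, $(ii)$ should follow from $(i)$ by a substitution. Writing $[x^{\p^{r}},y]=x^{-\p^{r}}\big(x^{y}\big)^{\p^{r}}=x^{-\p^{r}}\big(x\,[x,y]\big)^{\p^{r}}$ and applying $(i)$ with the pair $\big(x,[x,y]\big)$ in place of $(x,y)$ --- legitimate since $[x,y]$ is an honest group element --- each $[y,{}_{k}x]$ turns into $[x,y,{}_{k}x]$, the modulus becomes $K\big(x,[x,y]\big)$, and the leading factor $x^{\p^{r}}$ cancels the $x^{-\p^{r}}$ standing to its left, leaving $[x^{\p^{r}},y]\equiv[x,y]^{\p^{r}}[x,y,x]^{\binom{\p^{r}}{2}}\cdots[x,y,{}_{\p^{r}-1}x]\pmod{K(x,[x,y])}$. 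The hardest part of all this is the third step: pinning down the $\p$-adic valuations of the exponents $e_{c}$ of the commutators of weight $\ge 2$ in $y$, which rests on the polynomiality and degree bound for collection exponents --- once that is in hand, Kummer's theorem does the rest, and everything else is either formal bookkeeping or a finite, if tedious, computation.
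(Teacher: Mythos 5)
The paper does not actually prove Theorem \ref{Hall}: it is recalled verbatim from Proposition 1.1.32 of \cite{LGM}, so there is no internal argument to compare against, and your deferral to that source is exactly what the paper does. Your sketch is the standard collection-process proof of that result --- the identification of the exponents $\binom{\p^r}{k+1}$ of the commutators $[y,\ _k x]$, the $\p$-adic divisibility estimate $v_{\p}\binom{\p^r}{j}=r-v_{\p}(j)\ge r-k+1$ placing every basic commutator of weight at least $2$ in $y$ into $K(x,y)$, and the derivation of $(ii)$ from $(i)$ via $[x^{\p^r},y]=x^{-\p^r}\bigl(x[x,y]\bigr)^{\p^r}$ --- and is correct in outline, with only the termination/tail bookkeeping of the collection process (which you explicitly leave to \cite{LGM}) glossed over.
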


The $\p$-groups of class at most $\p-1$ are an example of regular $\p$-groups, which have a nice power-commutator structure. A $\p$-group of class $\p$ need not be regular, but they share a nice property as discovered by A. Mann \cite{AM}.
 \begin{lemma}\label{Mann} $($Mann$)$
 Let $\G$ be a $\p$-group of class $\cc\le \p$, and let $x, y\in \G$. Then $[x, y^{\p^n}]=1$ is equivalent to $[x, y]^{\p^n}=1$ and, similarly, it is equivalent to $[x^{\p^n}, y]=1$.
 \end{lemma}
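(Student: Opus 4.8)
The plan is to prove the chain of equivalences
\[
[x,y^{\p^n}]=1 \iff [x,y]^{\p^n}=1 \iff [x^{\p^n},y]=1
\]
by induction on $n$, reducing in each step to the case $n=1$, and to handle the base case using the Hall collection formula (Theorem~\ref{Hall}(ii)) together with the hypothesis that the class $\cc$ is at most $\p$.

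First I would treat the base case $n=1$. Apply Theorem~\ref{Hall}(ii) with $r=1$ to get
\[
[x^{\p},y]\equiv [x,y]^{\p}\,[x,y,x]^{\binom{\p}{2}}\cdots [x,y,\,_{\p-1}x] \ \ \mathrm{mod}\ K(x,[x,y]).
\]
Because $\cc\le \p$, every commutator appearing on the right of weight $\ge \p+1$ is trivial; in particular the term $[x,y,\,_{\p-1}x]$ has weight $\p+1$ and vanishes, and — crucially — the correction subgroup $K(x,[x,y])$ is generated by commutators of weight $\ge \p$ in which $[x,y]$ (hence $y$) occurs with multiplicity $\ge 2$, so these have total weight $\ge \p+1$ and are trivial as well. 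The binomial coefficients $\binom{\p}{k}$ for $2\le k\le \p-1$ are all divisible by $\p$ since $\p$ is prime, so each middle term $[x,y,\,_{k-1}x]^{\binom{\p}{k}}$ lies in $\gamma_?(\G)^{\p}$; I would argue these are absorbed after noting they are $\p$-th powers of commutators of weight $\ge 3$, and a short argument (again using $\cc\le\p$, so weight-$3$ commutators lie in $\gamma_3$ which has class small enough, or more directly that the relevant subgroup has exponent controlled) shows $[x^\p,y]=[x,y]^\p$ exactly, or at least that $[x^\p,y]=1 \iff [x,y]^\p=1$. Symmetrically, Theorem~\ref{Hall}(ii) applied after swapping the roles (or reading it for $[x,y^\p]$ via the identity $[x,y^\p]=\big([y^\p,x]\big)^{-y^\p}$ and the analogous collection formula for $[y^\p,x]$) gives $[x,y^\p]=1\iff [x,y]^\p=1$. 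This settles $n=1$.

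For the inductive step, suppose the equivalences hold for $n-1$. Then
\[
[x,y^{\p^n}]=1 \iff [x,(y^{\p})^{\p^{n-1}}]=1 \iff [x,y^{\p}]^{\p^{n-1}}=1,
\]
using the inductive hypothesis with $y^\p$ in place of $y$. Now I want to connect $[x,y^\p]^{\p^{n-1}}=1$ with $[x,y]^{\p^n}=1$. From the $n=1$ case we know $[x,y^\p]$ and $[x,y]^\p$ generate "the same" vanishing behaviour; the cleanest route is to work inside the subgroup $H=\langle x, [x,y]\rangle$ or inside $\gamma_2(\G)$, observing that $[x,y^\p]\equiv [x,y]^\p$ modulo a subgroup all of whose elements are killed by a bounded power of $\p$ that is controlled by the class, so that $[x,y^\p]^{\p^{n-1}}=1$ is equivalent to $[x,y]^{\p\cdot\p^{n-1}}=[x,y]^{\p^n}=1$. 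Running the same argument on the left gives $[x^{\p^n},y]=1\iff [x^\p,y]^{\p^{n-1}}=1\iff[x,y]^{\p^n}=1$, closing the induction.

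The main obstacle will be making the base case $n=1$ fully rigorous: the collection formula only gives a congruence modulo $K(x,[x,y])$ and carries the binomial-coefficient correction terms $[x,y,\,_{k-1}x]^{\binom{\p}{k}}$, so I must show that all of these corrections — both the $K$-term and the middle terms — either vanish outright from $\cc\le\p$ or interact with the equivalence in a controlled way (e.g. lie in a subgroup whose exponent divides $\p^{n}$ precisely when $[x,y]^{\p}$ does). Establishing that the class bound $\cc\le\p$ is exactly what forces the weight-$(\p+1)$ and higher terms to disappear, and checking that no "hidden" correction of weight $\le\p$ survives with weight $\ge 2$ in $y$, is the delicate point; once that is pinned down, the induction is routine.
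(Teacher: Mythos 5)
There is a genuine gap, and it sits exactly where the content of Mann's lemma lies. In your base case $n=1$ the class bound $\cc\le\p$ only kills the last term $[x,y,\,_{\p-1}x]$ (weight $\p+1$) and the part of $K(x,[x,y])$ consisting of commutators with at least $\p$ entries; but $K(x,[x,y])$ also contains the $\p$-th powers of the basic commutators of weight less than $\p$ (and weight at least $2$ in $[x,y]$), and the middle terms $[x,y,\,_{k-1}x]^{\binom{\p}{k}}$ for $2\le k\le\p-1$ have weight $k+1\le\p$, so none of these are removed by $\gamma_{\p+1}(\G)=1$. Thus the collection formula only yields $[x^{\p},y]=[x,y]^{\p}\,w$ with $w$ a product of $\p$-th powers of commutators of weight between $3$ and $\p$. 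Your claimed exact identity $[x^{\p},y]=[x,y]^{\p}$ is not a consequence of $\cc\le\p$ (already in class $3$ the factor $[x,y,x]^{\binom{\p}{2}}$ survives and need not be trivial), and the weaker equivalence $[x^{\p},y]=1\iff[x,y]^{\p}=1$ does not follow either, since nothing forces $w$ to vanish when one of the two sides does. This is precisely the delicate point -- groups of class exactly $\p$ need not be regular -- and your ``main obstacle'' paragraph names the difficulty without resolving it. The inductive step has a second gap: the $n=1$ case gives only a simultaneous-vanishing statement, not a congruence $[x,y^{\p}]\equiv[x,y]^{\p}$ modulo a subgroup of bounded exponent, and even with such a factorization $a=bc$ one cannot pass from $a^{\p^{n-1}}=1$ to $b^{\p^{n-1}}=1$ in a nonabelian group; so the bridge $[x,y^{\p}]^{\p^{n-1}}=1\iff[x,y]^{\p^{n}}=1$ is unsupported.

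For comparison, the paper does not reprove the lemma: it quotes Mann \cite{AM} for $n=1$ and remarks that $n>1$ follows using Theorem 7.2(a) of \cite{YB1}, a power--commutator property of \emph{regular} $\p$-groups. The clean way to close your argument is in that spirit rather than via collection: note that $\langle x,x^{y}\rangle=\langle x,[x,y]\rangle$ has class at most $\p-1$, because any nontrivial commutator of weight $w\ge 2$ in the generators $x$ and $[x,y]$ involves $[x,y]$ at least once and hence has weight at least $w+1$ in $\G$; so this subgroup is regular, and the regular-group result gives $(x^{-1}x^{y})^{\p^{n}}=1$ if and only if $x^{\p^{n}}=(x^{y})^{\p^{n}}$, i.e.\ $[x,y]^{\p^{n}}=1$ if and only if $[x^{\p^{n}},y]=1$, for all $n$ at once; the symmetric subgroup $\langle y,y^{x}\rangle$ handles $[x,y^{\p^{n}}]$. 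Without some regularity input of this kind, neither your base case nor your induction closes.
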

 We remark that the original version of this lemma is stated for $n=1$, nonetheless the case $n>1$ requires a minor modification, namely the use of [Theorem $7.2(a)$, \cite{YB1}].

 For the reader's convenience, we recall the following expansion in a nilpotent group of class $5$ given on page $495$ of \cite{PM5}.

\begin{align}\label{eq:class5}
\notag(x y)^n= &x^ny^n[y, x]^{{n}\choose{2}} [y, x, x]^{{n}\choose{3}} [y, x, y]^{{{n}\choose{2}}+2{{n}\choose{3}}} [y, x, x, x]^{{n}\choose{4}}\\
\notag&[y, x, x, y]^{2{{n}\choose{3}}+3{{n}\choose{4}}} [y, x, y, y]^{2{{n}\choose{3}}+3{{n}\choose{4}}} [y, x, x, [y, x]]^{{{n}\choose{3}}+7{{n}\choose{4}}+6{{n}\choose{5}}}\\
\notag&[y, x, y, [y, x]]^{6{{n}\choose{3}}+18{{n}\choose{4}}+12{{n}\choose{5}}} [y, x, x, x, x]^{{n}\choose{5}}\\
&[y, x, x, x, y]^{3{{n}\choose{4}}+4{{n}\choose{5}}} [y, x, x, y, y]^{{{n}\choose{3}}+6{{n}\choose{4}}+6{{n}\choose{5}}} [y, x, y, y, y]^{3{{n}\choose{4}}+4{{n}\choose{5}}}.
\end{align}

Using the above, we obtain the following identities.

\begin{lemma}\label{L:1}
Let $\G$ be a nilpotent group of class $5$ and $n$ be a positive integer.
\begin{itemize}
\item [$(i)$] for all $x\in \gamma_2(\G)$ and $y$ in $\G$ we have
\begin{align*}
(yx)^n= &y^nx^n[x, y]^{{n}\choose{2}} [x, y, y]^{{n}\choose{3}} [x, y, x]^{{{n}\choose{2}}+2{{n}\choose{3}}} [x, y, y, y]^{{n}\choose{4}}.
\end{align*}
\item [$(ii)$] for all $x, y\in \G$ we have 
\begin{align*}
[y^n, x]= &[y, x]^n [y, x, y]^{{n}\choose{2}} [y, x, y, y]^{{n}\choose{3}} [y, x, y, [y, x]]^{{{n}\choose{2}}+2{{n}\choose{3}}} [y, x, y, y, y]^{{n}\choose{4}}.
\end{align*}
\end{itemize}
\end{lemma}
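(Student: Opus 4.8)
The plan is to derive both identities as specializations of the class-$5$ expansion formula~\eqref{eq:class5}, exploiting the fact that many of the higher commutators collapse because one of the arguments already lies in $\gamma_2(\G)$.

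For part $(i)$, I would apply~\eqref{eq:class5} with the roles of $x$ and $y$ interchanged, i.e.\ expand $(yx)^n$ with $x\in\gamma_2(\G)$ playing the role of the ``second'' variable. Then every commutator appearing in~\eqref{eq:class5} that has weight at least $2$ in the $\gamma_2(\G)$-argument $x$ has total weight at least $1+2\cdot 2 = 5$ in the lower central series when counted correctly—more precisely, such a commutator lies in $\gamma_{w}(\G)$ where $w$ exceeds $5$ once the weight in $x$ is at least $2$ and there is at least one further entry, so it is trivial. Concretely, the surviving terms from~\eqref{eq:class5} are exactly those linear in $x$: namely $y^n x^n$, $[x,y]^{\binom n2}$, $[x,y,y]^{\binom n3}$, $[x,y,y,y]^{\binom n4}$, together with $[x,y,x]$-type terms which are linear in... wait, $[x,y,x]$ has weight $2$ in $x$, so one must check this term separately: it lies in $\gamma_4(\G)$ and is \emph{not} automatically trivial, which is why it appears in the statement with exponent $\binom n2 + 2\binom n3$. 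The correct bookkeeping is that a commutator survives iff it is either linear in $x$ or is the specific weight-$4$ term $[x,y,x]$ (weight $2$ in $x$, weight $2$ in $y$, total $4$); everything of weight $2$ in $x$ and weight $\ge 3$ in $y$, or weight $\ge 3$ in $x$, dies by class $5$. Matching the exponents from~\eqref{eq:class5} (with $x\leftrightarrow y$ swapped) against the claimed formula is then a direct substitution.

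For part $(ii)$, I would use the Hall collection formula, Theorem~\ref{Hall}$(ii)$ is not quite in the right form since it is a mod-$K(x,[x,y])$ statement for prime-power exponents; instead the cleanest route is to expand $[y^n,x]$ via the identity $[y^n,x] = [y\cdot y^{n-1},x]$ and induct, or better, to apply~\eqref{eq:1.2} together with part~$(i)$. The slick approach: write $y^n x = (xy')^?$—actually the standard trick is that $[y^n,x] = y^{-n}x^{-1}y^n x = y^{-n}(x^{-1}y^n x) = y^{-n}\,(y[y,x])^n$, and now $[y,x]\in\gamma_2(\G)$, so I can apply part~$(i)$ to expand $(y\cdot[y,x])^n$. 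This gives $y^{-n}\cdot y^n [y,x]^n [[y,x],y]^{\binom n2}[[y,x],y,y]^{\binom n3}[[y,x],y,[y,x]]^{\binom n2 + 2\binom n3}[[y,x],y,y,y]^{\binom n4}$, and cancelling $y^{-n}y^n$ and rewriting $[[y,x],y,\dots] = [y,x,y,\dots]$ yields exactly the claimed expression.

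The main obstacle I anticipate is the combinatorial verification in part $(i)$: one must argue carefully that each of the eleven ``higher'' commutators in~\eqref{eq:class5} with weight $\ge 2$ in the $\gamma_2$-argument lies in $\gamma_6(\G)=1$, treating the borderline weight-$4$ term $[y,x,x]$ (in the original notation, i.e.\ weight $2$ in the $\gamma_2$-variable) with care, and then confirm that no binomial-coefficient bookkeeping error creeps in when the two variables are swapped relative to the orientation in~\eqref{eq:class5}. Part $(ii)$ should then follow almost mechanically from part $(i)$ via the conjugation trick above, so essentially all the real work is the reduction of~\eqref{eq:class5} in part~$(i)$; once that is in hand, part~$(ii)$ is a short computation.
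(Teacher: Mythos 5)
Your proposal is correct and matches the paper's (implicit) derivation: the identities are stated as direct consequences of \eqref{eq:class5}, obtained for $(i)$ by specializing to $x\in\gamma_2(\G)$ (all terms of weight at least $2$ in $x$ except $[x,y,x]$, as well as $[x,y,y,y,y]$, land in $\gamma_6(\G)=1$), and for $(ii)$ by writing $[y^n,x]=y^{-n}(y[y,x])^n$ with $[y,x]\in\gamma_2(\G)$ and applying $(i)$, exactly as you do. The only blemishes are minor bookkeeping slips in your prose (with $x\in\gamma_2(\G)$ the surviving term $[x,y,x]$ has weight $1$ in $y$ and lies in $\gamma_5(\G)$, not $\gamma_4(\G)$, and the discarded commutators are trivial because their lower-central weight is at least $6$, not $5$), none of which affects the validity of the argument.
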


The next lemma provides the first step in proving that a $3$-group of class $5$ satisfies the hypothesis of Lemma \ref{L:1.1}.
\begin{lemma}\label{L:2}
Let $\G$ be a $3$-group of class $5$ and $n\ge 2$. If $\exp(\G)=3^n$, then $\exp(\G^3)=3^{n-1}$.
\end{lemma}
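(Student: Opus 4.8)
\emph{Proof plan.} The inequality $\exp(\G^3)\ge 3^{n-1}$ is clear: a $g\in\G$ of order $3^n$ gives $g^3\in\G^3$ of order $3^{n-1}$. Since $\G^3$ is a finite $3$-group with $\exp(\G^3)\mid 3^n$, for the reverse inequality it suffices to prove $w^{3^{n-1}}=1$ for every $w\in\G^3$; write $m=3^{n-1}$.

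I would first normalise $w$. As $\G/\gamma_2(\G)$ is abelian, the image of $\G^3$ in it is exactly its subgroup of cubes, so $w=a^3c$ with $a\in\G$ and $c\in\G^3\cap\gamma_2(\G)$. Since $c\in\gamma_2(\G)$ and $\G$ has class $5$, Lemma~\ref{L:1}(i) (with $y=a^3$, $x=c$, exponent $m$) gives
\[
w^{m}=(a^3)^{m}\,c^{m}\,[c,a^3]^{\binom{m}{2}}[c,a^3,a^3]^{\binom{m}{3}}[c,a^3,c]^{\binom{m}{2}+2\binom{m}{3}}[c,a^3,a^3,a^3]^{\binom{m}{4}},
\]
where $(a^3)^{m}=a^{3^n}=1$, $[c,a^3]\in\gamma_3(\G)$, $[c,a^3,a^3]\in\gamma_4(\G)$ and $[c,a^3,c],[c,a^3,a^3,a^3]\in\gamma_5(\G)\le Z(\G)$. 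Thus $w^m$ is a product of $c^m$ and four commutators, all lying in $\gamma_2(\G)$.

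The decisive observation is that $\gamma_2(\G)$, being of class at most $2$, is a regular $3$-group, so $\Omega_{n-1}(\gamma_2(\G))=\{x\in\gamma_2(\G):x^{3^{n-1}}=1\}$ is a \emph{subgroup}; it therefore suffices to show that every factor of $w^m$ lies in it. For the four commutator factors I would strip off the inner $a^3$'s by the collection formula of Theorem~\ref{Hall}(ii); thanks to the class-$5$ hypothesis its error terms vanish, giving exact identities such as $[c,a^3]=[c,a]^3e$ with $[c,a]\in\gamma_3(\G)$ and $e\in\gamma_4(\G)$, and in this way each commutator factor becomes a product of cubes and higher powers of elements of $\gamma_i(\G)$, $i\in\{3,4,5\}$. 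Feeding in the $3$-adic valuations $v_3\binom{m}{2}=n-1$, $v_3\binom{m}{3}=v_3\!\big(\binom{m}{2}+2\binom{m}{3}\big)=n-2$, $v_3\binom{m}{4}=n-1$ together with the a priori bound $\exp(\gamma_i(\G))\mid 3^n$ then kills every commutator factor. For the remaining factor $c^m$ I would prove $\exp(\G^3\cap\gamma_2(\G))\mid 3^{n-1}$, peeling cubes off down the lower central series and using that every $\gamma_i(\G)$ ($i\ge2$) is regular; as $c\in\G^3\cap\gamma_2(\G)$ this yields $c^m=1$, whence $w^m=1$.

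\emph{Main obstacle.} The bound $\exp(\gamma_i(\G))\mid 3^n$ is, for the deepest commutators, just short of what the valuations demand: one really needs $\exp(\gamma_5(\G))\mid 3^{n-1}$, i.e.\ $\gamma_5(\G)\subseteq\Omega_{n-1}(\gamma_2(\G))$, and proving this — along with the lower-central-series peeling that controls $c^m$ — is the genuine work; I would do it by induction on $n$ (or on $|\G|$, passing to a central quotient $\G/N$ with $|N|=3$ and reducing to $Z(\G)$ cyclic). The whole scheme rests on the fact that, although $\G$ need not be regular, one can force every quantity that appears into the \emph{regular} subgroup $\gamma_2(\G)$, where $\Omega_{n-1}$ is a subgroup; there the shortness of the class-$5$ expansions in Lemma~\ref{L:1} and Theorem~\ref{Hall} and the $3$-divisibility of the binomials $\binom{3^{n-1}}{j}$ make the estimates close up.
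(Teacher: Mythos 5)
Your reduction and bookkeeping are sound as far as they go: writing $w=a^3c$ with $c\in\G^3\cap\gamma_2(\G)$, expanding $w^m$ (with $m=3^{n-1}$) by Lemma~\ref{L:1}$(i)$, and the valuations $v_3\binom{m}{2}=v_3\binom{m}{4}=n-1$, $v_3\binom{m}{3}=v_3\bigl(\binom{m}{2}+2\binom{m}{3}\bigr)=n-2$ are all correct. But the argument does not close, and you say so yourself. After collecting, $[c,a^3]^{\binom{m}{2}}$ leaves the term $[c,a,a,a]^{\binom{m}{2}}$, and $[c,a^3,c]^{\binom{m}{2}+2\binom{m}{3}}=[c,a,c]^{3\left(\binom{m}{2}+2\binom{m}{3}\right)}$ leaves a $3^{n-1}$-st power of an element of $\gamma_5(\G)$; with only the a priori bound $\exp(\gamma_5(\G))\mid 3^n$ neither is forced to vanish, so you genuinely need $\exp(\gamma_5(\G))\mid 3^{n-1}$. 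Worse, the remaining factor $c^m$ needs $\exp(\G^3\cap\gamma_2(\G))\mid 3^{n-1}$, which is essentially the lemma itself restricted to $\gamma_2(\G)$: regularity of $\gamma_2(\G)$ (class at most $2$) only makes $\Omega_{n-1}(\gamma_2(\G))$ a subgroup, it does not put $c$ inside it --- that is exactly what has to be proven, and ``peeling cubes off down the lower central series'' is not an argument. Note also that in this paper $\exp(\gamma_5(\G))\mid 3^{n-1}$ is a \emph{consequence} of the present lemma (via $\gamma_4(\G)\le\G^3$, which follows from Burnside's exponent-$3$ result), so invoking it requires an independent proof, which you defer, together with the $c^m$ claim, to an unspecified induction on $n$ or $|\G|$. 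That deferred part is where the proof would actually live; as written this is an outline with the decisive steps missing. There is also a logical slip early on: showing that every factor of $w^m$ lies in $\Omega_{n-1}(\gamma_2(\G))$ would only bound the order of $w^m$, not give $w^m=1$; what your later plan (correctly) requires is that each factor be trivial, so the $\Omega$-subgroup observation does no real work.

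For comparison, the paper avoids any global exponent claims about $\gamma_5(\G)$ or $\G^3\cap\gamma_2(\G)$ by arguing at the level of two fixed generators: it expands $(x^3y^3)^{3^{n-1}}$ via \eqref{eq:class5} and kills each basic-commutator power by hand, the key intermediate facts being $[y^3,x^3]^{3^{n-1}}=1$ (obtained by expanding $[(y^3)^{3^{n-1}},x^3]$ with Lemma~\ref{L:1}$(ii)$, Mann's Lemma~\ref{Mann}, and Lemmas 3.4 and 3.6 of \cite{APT}) and then $[y^3,x^3,z^3]^{3^{n-2}}=1$ for all $z$, from which the weight-$4$ and weight-$5$ terms are handled. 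Some argument of this concrete commutator-by-commutator kind is what your two outstanding claims would require; until you supply it, the proposal has a genuine gap.
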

\begin{proof}
To prove $\exp(\G)=3^{n-1}$, we prove $(x^3y^3)^{3^{n-1}}=1$ for all $x, y\in \G$. We expand $(x^3y^3)^{3^{n-1}}$ using \eqref{eq:class5}, and prove that all the powers of basic commutators in the expansion are trivial. We first show $[y^3, x^3]^{3^{n-1}}=1$, which implies $[y^3, x^3]^{{3^{n-1}}\choose{2}}=1$.
To prove that $[y^3, x^3]^{3^{n-1}}=1$, we expand $[(y^3)^{3^{n-1}}, x^3]$ according to Lemma \ref{L:1} $(ii)$ and we observe that all the powers of commutators appearing are trivial. We begin with $[y^3, x^3,\ _3\ y^3] ^{{3^{n-1}}\choose{4}}$. Since $\gamma_6(\G)=1$ the commutators of weight $5$ are multilinear, so that 
\begin{equation}\label{eq:2.1}
[y^3, x^3,\ _3\ y^3] ^{3^{n-2}}= [y, x,\ _3\ y]^{3^{n+3}}= 1.
\end{equation}
In particular, since $3^{n-2}$ divides ${{3^{n-1}}\choose{4}}$ we have $[x^3,\ _4\ y^3] ^{{3^{n-1}}\choose{4}}=1$. A similar argument proves $[y^3, x^3, y^3, [y^3, x^3]]^{3^{n-2}}=1$ and, in particular, $[y^3, x^3, y^3, [y^3, x^3]]^{{{n}\choose{2}}+2{{n}\choose{3}}}=1$. Next we consider $[y^3, x^3,\ _2\ y^3]^{{3^{n-1}}\choose{3}}$. Using corollary $1.1.7$ of \cite{LGM}, we have
\begin{equation}\label{eq:2.2}
[y^3, x^3, y^3]\equiv [y^3, x^3, y]^3 [y^3, x^3, y, y]^3\ \text{mod}\ \gamma_5(\G).
\end{equation}
Using \eqref{eq:2.2} in $[[y^3, x^3, y^3], y^3]$ and expanding $[[y^3, x^3, y]^3[y^3, x^3, y, y]^3, y^3]$ by \eqref{eq:1.1}, we obtain
\begin{equation*}
[[y^3, x^3, y^3], y^3]= [y^3, x^3, y, y^3]^3 [y^3, x^3, y, y, y^3]^3.
\end{equation*}
Furthermore, $[y^3, x^3, y, y, y^3]\in \Z$ yielding
\begin{equation}\label{eq:2.3}
[y^3, x^3,\ _2\ y^3]^{3^{n-2}}= [y^3, x^3, y, y^3]^{3^{n-1}} [y^3, x^3, y, y, y^3]^{3^{n-1}}.
\end{equation}
Since $[y^3, x^3, y, y, y^3]$ has weight $5$, by multilinearity $[y^3, x^3, y, y, y^3]^{3^{n-1}}= [y, x,\ _3\ y]^{3^{n+2}}=1$. Moreover $\langle [y^3, x^3, y], y\rangle$ has class at most $3$, and $[[y^3, x^3, y], (y^3)^{3^{n-1}}]=1$, so $[y^3, x^3, y, y^3]^{3^{n-1}}=1$ by Lemma \ref{Mann}. Therefore $[y^3, x^3,\ _2\ y^3]^{{3^{n-1}}\choose{3}}=1$ by \eqref{eq:2.3}. Next to prove $[y^3, x^3, y^3]^{{3^{n-1}}\choose{2}}=1$, we see it's inverse $[y^3, [y^3, x^3]]$ has order $3^{n-1}$. Since $\langle y^3, [y^3, x^3]\rangle$ has class at most $4$ and $[(y^3)^{3^{n-1}}, [y^3, x^3]]=1$, by Lemma $3.6$ of \cite{APT}, we have $[y^3, [y^3, x^3]]^{3^{n-1}}= [y^3, x^3,\ _3\ y^3]^{{{3^{n-1}}\choose{3}}}$. Therefore, we obtain $[y^3, [y^3, x^3]]^{3^{n-1}}=1$ by \eqref{eq:2.1}. Hence from the expansion $[(y^3)^{3^{n-1}}, x^3]$, we obtain $[y^3, x^3]^{3^{n-1}}=1$.
\par Now we proceed to show $[y^3, x^3, z^3]^{3^{n-2}}=1$ for all $z\in \G$. Using corollary $1.1.7$ of \cite{LGM}, we obtain $[y^3, x^3, z^3]=[y^3, x^3, z]^3 [y^3, x^3, z, z]^3 [y^3, x^3, z, z, z]$.
Observe that $[y^3, x^3, z]$, $[y^3, x^3, z, z]$, $[y^3, x^3, z, z, z]$ commute with one another, and hence
\begin{equation}\label{eq:2.4}
[y^3, x^3, z^3]^{3^{n-2}} = [y^3, x^3, z]^{3^{n-1}} [y^3, x^3, z, z]^{3^{n-1}} [y^3, x^3, z, z, z]^{3^{n-2}}.
\end{equation}
Since $\gamma_6(\G)=1$, by multilinearity $[y^3, x^3, \ _3\ z]^{3^{n-2}}=[y, x\ _3\ z]^{3^n}=1$. Next we consider $[y^3, x^3, z]^{3^{n-1}}$. Since $\langle [y^3, x^3], z\rangle$ has class at most $4$, and $[y^3, x^3]^{3^{n-1}}=1$, we obtain $[y^3, x^3, z]^{3^{n-1}}= [z,\ _3\ [y^3, x^3]]^{{3^{n-1}}\choose{3}}=1$ by Lemma $3.4$ of \cite{APT}. Furthermore, expanding $[[y^3, x^3, z]^{3^{n-1}}, z]$ by \eqref{eq:1.1} yields that $[y^3, x^3, z, z]^{3^{n-1}}= 1$. Hence $[y^3, x^3, z^3]^{3^{n-2}}=1$ by \eqref{eq:2.4}. In particular, $[y^3, x^3, x^3]^{3^{n-2}}=1$ and $[y^3, x^3, y^3]^{3^{n-2}}=1$. Moreover $[y^3, x^3, x^3]$, $[y^3, x^3, y^3]\in Z_3(\G)$, so by Lemma \ref{Mann}, every commutator in $\G$ having weight $1$ either in $[y^3, x^3, x^3]$ or in $[y^3, x^3, y^3]$ has order at most $3^{n-2}$. Thus we obtain that all the powers of the basic commutators having weight $4$ or more in the expansion of $(x^3y^3)^{3^{n-1}}$ are trivial, yielding $(x^3y^3)^{3^{n-1}}=1$.
\end{proof}

Suppose $\N, \MM\unlhd\G$, then Hall's collection formulae yields
\begin{equation}\label{eq:Hall}
[\N^{\p}, \MM]\le [\N, \MM]^{\p} [\MM,\ _{\p}\N].
\end{equation}

For $\p$-groups of class $5$ we have that the exponent of $\G\wedge \G$ divides the exponent of $\G$ for all odd primes with the exception of $\p=3$, and Theorem \ref{th:3} of this paper is aimed to fill this gap.

\begin{theorem}\label{th:3}
If $\G$ be a $3$-group of class at most $5$, then $\e(\G\wedge\G)$ divides $\e(\G)$.
\end{theorem}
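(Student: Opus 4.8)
The strategy is to show that a $3$-group $\G$ of class at most $5$ satisfies the three hypotheses of Lemma~\ref{L:1.1} with $\p=3$, and then to apply that lemma. Two reductions clear the trivial cases. If $\G$ has class at most $3$, then Theorem~3.11 of \cite{APT} already gives $\exp(\G\wedge\G)\mid\exp(\G)$, so we may assume $\G$ has class $4$ or $5$; a group of class $4$ is in particular of class $5$, so the expansion \eqref{eq:class5}, Lemma~\ref{L:1}, Lemma~\ref{Mann} and Theorem~\ref{Hall} are all available. Moreover, since a group of exponent $3$ is nilpotent of class at most $3$ (Levi--van der Waerden), a $3$-group of class $\geq4$ cannot have exponent $3$, so $\exp(\G)=3^{n}$ with $n\geq2$, which is the hypothesis of Lemma~\ref{L:2}.

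Hypothesis $(ii)$ of Lemma~\ref{L:1.1} is then exactly Lemma~\ref{L:2}: $\exp(\G^{3})=3^{n-1}$. Hypothesis $(iii)$, namely $\gamma_{4}(\G)\subseteq\G^{3}$, follows because $\G/\G^{3}$ has exponent dividing $3$ and is therefore nilpotent of class at most $3$, so $\gamma_{4}(\G/\G^{3})=1$. It remains to establish hypothesis $(i)$: that $\G^{3}$ is powerful, i.e. $[\G^{3},\G^{3}]\subseteq(\G^{3})^{3}$.

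Since $(\G^{3})^{3}$ is characteristic in $\G$ and, by \eqref{eq:1.1}--\eqref{eq:1.2}, $[\G^{3},\G^{3}]$ is generated by the $\G$-conjugates of the commutators $[a^{3},b^{3}]$ with $a,b\in\G$, it suffices to prove $[a^{3},b^{3}]\in(\G^{3})^{3}$ for all $a,b$. I would expand by Lemma~\ref{L:1}$(ii)$ with $n=3$:
\[
[a^{3},b^{3}]=[a,b^{3}]^{3}\,[a,b^{3},a]^{3}\,[a,b^{3},a,a]\,[a,b^{3},a,[a,b^{3}]]^{5},
\]
and treat the four factors in turn, using repeatedly that a cube of an element of $\G$ lies in $\G^{3}$, that a cube of a commutator of weight $\geq4$ lies in $(\G^{3})^{3}$ (as $\gamma_{4}(\G)\subseteq\G^{3}$), and that $[\G^{3},\G]\subseteq\gamma_{2}(\G)^{3}\gamma_{4}(\G)\subseteq\G^{3}$ by \eqref{eq:Hall}. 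Collecting $[a,b^{3}]$ by Theorem~\ref{Hall} (equivalently, applying Lemma~\ref{L:1}$(ii)$ to $[b^{3},a]$) exhibits it as a product of cubes of elements of $\G$ and of commutators of weight $\geq4$; hence $[a,b^{3}]\in\G^{3}$, so $[a,b^{3}]^{3}\in(\G^{3})^{3}$, and also $[a,b^{3},a]\in[\G^{3},\G]\subseteq\G^{3}$, so $[a,b^{3},a]^{3}\in(\G^{3})^{3}$.

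The remaining factors $[a,b^{3},a,a]\in\gamma_{4}(\G)$ and $[a,b^{3},a,[a,b^{3}]]^{5}\in\gamma_{5}(\G)$ must be shown to lie in $(\G^{3})^{3}$, not merely in $\G^{3}$, and this is the main obstacle. Here I would compute them modulo $(\G^{3})^{3}$ via the collection formulae. Setting $w=[a,b^{3}][a,b]^{-3}$, the expansion of $[b^{3},a]$ gives $w=[b,a,b,[b,a]]^{-5}[b,a,b,b]^{-1}[b,a,b]^{-3}\in\gamma_{3}(\G)$; since $u\mapsto[u,a,a]$ is a homomorphism $\gamma_{3}(\G)\to\gamma_{5}(\G)$ (the obstructing commutators of commutators vanishing as $\gamma_{6}(\G)=1$), the first two factors of $w$ drop out and $[w,a,a]=[b,a,b,a,a]^{-3}\in\gamma_{5}(\G)^{3}\subseteq(\G^{3})^{3}$; combined with $[a,b^{3},a,a]=[a,b,a,a]^{3}[w,a,a]$ and $[a,b,a,a]^{3}\in(\G^{3})^{3}$, this gives $[a,b^{3},a,a]\in(\G^{3})^{3}$. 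A parallel computation, again using the homomorphism property and that $\gamma_{5}(\G)$ is central, shows $[a,b^{3},a,[a,b^{3}]]=[a,b,a,[a,b^{3}]]^{3}\in\gamma_{5}(\G)^{3}\subseteq(\G^{3})^{3}$, hence so is its fifth power. I expect the only delicate point to be the careful collection underlying these last identities --- in the style of the proof of Lemma~\ref{L:2}, invoking Lemma~\ref{Mann} and Lemma~3.6 of \cite{APT} to pin down the exponents of the central commutators that arise. Once $(i)$--$(iii)$ hold, Lemma~\ref{L:1.1} yields $\exp(\G\wedge\G)\mid\exp(\G)$, and in particular $\exp(\M)\mid\exp(\G)$.
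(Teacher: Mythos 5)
Your proposal is correct, and its skeleton coincides with the paper's: reduce to Lemma~\ref{L:1.1} by checking its three hypotheses, getting $(ii)$ from Lemma~\ref{L:2} and $(iii)$ from the Levi--van der Waerden fact that exponent-$3$ groups have class at most $3$. The differences are in the two remaining steps. For the small case, you dispose of class $\le 3$ via Theorem 3.11 of \cite{APT} and observe that class $\ge 4$ forces $\e(\G)\ge 9$, whereas the paper instead quotes Proposition 7$(iii)$ of \cite{PM2} for exponent $3$; both are fine. For hypothesis $(i)$, the powerfulness of $\G^{3}$, you do an element-wise collection: expand $[a^{3},b^{3}]$ by Lemma~\ref{L:1}$(ii)$ and push each factor into $(\G^{3})^{3}$, which requires the auxiliary identities $[a,b^{3},a,a]=[a,b,a,a]^{3}[w,a,a]$ and $[a,b^{3},a,[a,b^{3}]]=[a,b,a,[a,b^{3}]]^{3}$ --- I checked these and they do hold (the correction terms land in $[\gamma_{4},\gamma_{2}]$, $[\gamma_{5},\gamma_{2}]$ or $\gamma_{6}$, all trivial), so no appeal to Lemma~\ref{Mann} or Lemma 3.6 of \cite{APT} is actually needed there. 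The paper gets the same conclusion in two lines at the subgroup level from \eqref{eq:Hall} (which you yourself invoke for a lesser purpose): $[\G^{3},\G^{3}]\le[\G^{3},\G]^{3}[\G^{3},\ _{3}\,\G]$, with $[\G^{3},\G]\le\G^{3}$ by normality and $[\G^{3},\ _{3}\,\G]\le\gamma_{4}(\G)^{3}\le(\G^{3})^{3}$ since $\gamma_{6}(\G)=1$. So your route buys nothing extra but is still a valid, if considerably more laborious, verification; the only cosmetic flaw is the hedged phrasing (``I would'', ``I expect''), since the computations you sketch are in fact complete and correct.
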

\begin{proof}
Suppose $\exp(\G)=3^n$. The case $n=1$ is Proposition 7 $(iii)$ of \cite{PM2}, so assume $n\ge 2$. By Lemma \ref{L:2}, we have $\exp(\G^3)=3^{n-1}$. Recall groups of exponent $3$ have class at most $3$ \cite{LBLW}, we obtain $\gamma_4(\G)\le \G^3$. Moreover applying \eqref{eq:Hall} yields 
$[\G^3, \G^3]\le [\G^3, \G]^3 [\G^3,\ _3\ \G]$, and since $\gamma_6(\G)=1$, $[\G^3,\ _3\ \G]\le \gamma_4(\G)^3\le (\G^3)^3$ by \eqref{eq:Hall}. Hence we have that $\G^3$ is powerful,  and thus $\exp(\G\wedge \G)\mid \exp(\G)$, by Lemma $\ref{L:1.1}$.
\end{proof}
We have that for $\p\ge 5$ the $\p$-groups of class $5$ satisfy the statement of the above theorem [Corollary 3.11, \cite{APT}]. In particular, we have the following:
\begin{corollary}\label{cr:3.8}
Let $\p$ be an odd prime. If $\G$ is a $\p$-group of class at most $5$, then $\e(\G\wedge\G)$ divides $\e(\G)$.
\end{corollary}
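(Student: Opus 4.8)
The plan is to reduce Corollary \ref{cr:3.8} to the two cases already handled in the excerpt, splitting on the prime. For $\p = 3$, the statement is precisely Theorem \ref{th:3}, so nothing further is needed. For $\p \ge 5$, the point is that every $\p$-group of class at most $5$ has class strictly less than $\p$, hence is a regular $\p$-group; and for such groups the divisibility $\e(\G \wedge \G) \mid \e(\G)$ is already known. So the first step is simply to observe the dichotomy $\p = 3$ versus $\p \ge 5$ exhausts all odd primes.

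Next, for $\p \ge 5$ I would invoke the result cited right before the corollary, namely [Corollary 3.11, \cite{APT}], which gives $\e(\G \wedge \G) \mid \e(\G)$ for $\p$-groups of class $5$ when $\p \ge 5$; since class at most $5$ is covered a fortiori (a group of smaller class is in particular of class at most $5$, and the argument in \cite{APT} only uses an upper bound on the class), this disposes of the case $\p \ge 5$. Combining this with Theorem \ref{th:3} for $\p = 3$ completes the proof. Concretely, the write-up is two lines: ``If $\p = 3$ this is Theorem \ref{th:3}. If $\p \ge 5$, then $\G$ has class at most $5 < \p$, so the claim follows from [Corollary 3.11, \cite{APT}].''

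I do not expect any genuine obstacle here, since all the real work has been done: Lemma \ref{L:2} (together with the exponent-$3$-implies-class-$3$ fact and the collection formula \eqref{eq:Hall}) is what makes $\G^3$ powerful and lets Lemma \ref{L:1.1} apply in Theorem \ref{th:3}, and the $\p \ge 5$ case is a black-box citation. The only thing to be slightly careful about is making sure the cited result from \cite{APT} is genuinely stated for class \emph{at most} $5$ rather than exactly $5$, and that it requires $\p \ge 5$ (or at least $\p > 5$, in which case one should double-check whether $\p = 5$ needs the class-$5 = \p$ regularity-failure analysis of \cite{APT} rather than the class-$\le \p-1$ regular case); if \cite{APT} only treats class exactly $5$, one would additionally cite Theorem 3.11 of \cite{APT} for the class-at-most-$4$ subcase, but this is routine bookkeeping rather than a mathematical difficulty.
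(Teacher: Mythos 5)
Your proposal matches the paper's intended argument exactly: for $\p=3$ the statement is Theorem \ref{th:3}, and for $\p\ge 5$ it is the cited [Corollary 3.11, \cite{APT}], which is precisely how the paper deduces the corollary. Only a small caveat: your side remark that class at most $5$ is ``strictly less than $\p$'' fails for $\p=5$ (so regularity is not the reason there), but since your actual argument rests on the citation rather than on regularity --- and you flag this yourself --- the proof is fine.
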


\section*{References}
\bibliographystyle{amsplain}
\bibliography{Bibliography}
\end{document}